\PassOptionsToPackage{table,dvipsnames}{xcolor}
\documentclass{ifacconf}

\usepackage[pdftex]{graphicx}
    \graphicspath{{Figures/}}                   
    \DeclareGraphicsExtensions{.pdf,.jpg,.png}  
    \usepackage{svg}                            
    \usepackage{float}                          

\usepackage{amsmath}                            
\usepackage{amssymb}                            

\usepackage{mathtools}                          
\usepackage{breqn}                              
\usepackage{array}                              
\usepackage{nicefrac}
\usepackage{blkarray, bigstrut}                 

\usepackage{url}

\usepackage{dutchcal}
\usepackage{natbib}                             
\usepackage{siunitx}                            
\usepackage{textcomp}                           
\usepackage{algorithm}
\usepackage{algpseudocode}

\newtheorem{ass}{Assumption}
\newtheorem{remark}{Remark}
\newtheorem{lmm}{Lemma}

\usepackage{cuted}
\usepackage{enumerate}

\newcommand{\R}{\mathbb{R}}                     

\newcommand*{\QED}{\null\nobreak\hfill\ensuremath{\blacksquare}}%

\newcommand\reals[0]{\mathbb{R}}
\newcommand\sob[2]{W^{1,#1}_#2}

\newcommand\LP[2]{L^{#1}_#2}
\renewcommand\AE{\mathrm{a.e.}}
\DeclareMathOperator*{\esssup}{ess\,sup}
\newcommand\domain[0]{[0,T]}

\newcommand\mc[0]{\mathcal}
\newcommand\veps[0]{\varepsilon}

\usepackage[deletedmarkup=sout,authormarkup=superscript]{changes}
\usepackage[dvipsnames]{xcolor}
\definechangesauthor[name={Marco}, color=NavyBlue]{MN}
\definechangesauthor[name={Dominic}, color=RedViolet]{DLM}

\newif\ifarxiv
\arxivtrue

\begin{document}
\begin{frontmatter}

\title{A Semi-smooth Newton Method for the Constrained Optimal Control of Continuous-Time Linear Systems\thanksref{footnoteinfo}}  

\thanks[footnoteinfo]{This material is based upon work supported by the National Science Foundation (Grant \#2046212) and the Natural Sciences and Engineering Research Council of Canada (Reference \#\,RGPIN-2023-03257).
Corresponding author: marco.nicotra@colorado.edu}

\author[CU]{Simon J. Jones} 
\author[UBC]{Dominic Liao-McPherson}
\author[CU]{Marco~M.~Nicotra}

\address[CU]{University of Colorado Boulder, Department of Electrical, Computer, and Energy Engineering,
   Boulder, CO (USA), 80309.}
\address[UBC]{University of British Columbia, Department of Mechanical Engineering, Vancouver, BC (Canada), V6T 1Z4.}

\begin{abstract}
This paper details a novel indirect method for solving constrained optimal control problems
(OCPs) directly in continuous-time function space. The KKT conditions are embedded in a a non-smooth complementarity function, which enables their reformulation as a rootfinding problem in Banach space. This problem is then solved using a non-smooth Newton method. Finally, the paper shows that the Newton update can be obtained by solving a modified
differential Riccati equation, where the cost terms are reweighted at every iteration based on the constraint multipliers. Numerical simulations show the effectiveness of the method, which converges superlinearly up to the tolerance of the ODE solver.
\end{abstract}

\begin{keyword}
Optimal Control Theory, Numerical Methods for Optimal Control, Non-smooth and Discontinuous Optimal Control, Newton methods, State and Input Constraints
\end{keyword}

\end{frontmatter}

\section{Introduction}
Continuous-time optimal control problems (OCPs) are ubiquitous across a variety of engineering domains. They are used to design and control dynamic systems in the areas of e.g., aerospace, \citep{dearing2022}, robotics \cite{bordalba2022direct}, energy, manufacturing, economics \citep{chow1976control}, quantum systems \citep{shao2024}, and many more.

Existing OCP solvers can be grouped into two main categories: ``direct'' and ``indirect'' approaches. Direct (discretize then differentiate) methods finitely parameterize the trajectories of the underlying dynamical system to transform the OCP into a nonlinear program which can then we solved using standard mathematical programming algorithms and software. There are a variety of ``direct'' approaches including pseudospectral methods \citep{gong2008spectral}, direct transcription \citep{rao2014trajectory}, multiple-shooting \citep{bock1984multiple}, and collocation,  \citep{kelly2017introduction}. These methods are well-developed but require carefully balancing tractability with accuracy when choosing how to discretize the continuous-time problem.

Indirect (differentiate then discretize) methods replace the OCP with the necessary conditions for optimality using Pontryagin's Maximum Principle \citep{kopp1962pontryagin} before solving the resulting differential-algebraic two-point boundary value problem \citep{bryson2018applied}. Indirect methods are less popular than direct approaches due to the difficulty of solving the necessary conditions numerically but are conceptually attractive for infinite horizon settings or problems where time is explicitly part of the optimization problem (e.g., flexible end times, hybrid systems, adaptive discretizations etc.) which can be challenging for direct approaches. 

Researchers have proposed a variety of indirect algorithms based on semi-smooth Newton \citep{gerdts2008global,chen2011numerical}, interior point \citep{schiela2011interior,schiela2011barrier,schiela2009barrier}, sequential quadratic programming \citep{machielsen1988numerical} and active-set \citep{hintermuller2003semi} methods. These algorithms are largely conceptual, focusing on obtaining and demonstrating convergence guarantees rather than implementation and application. The more application focused PRONTO method \citep{hauser2002} solves unconstrained OCPs using Newton's method by implementing the underlying Newton updates using differential Riccati recursion. The approach was later updated in \citep{hauser2006} to handle constraints using a primal barrier method.

However, primal-barrier methods are far from the state-of-the-art in nonlinear programming, which motivates this work which merges the PRONTO framework \citep{hauser2002} with the theoretically well-developed semi-smooth Newton based algorithms \citep{gerdts2008global,chen2011numerical} for finite-horizon constrained linear-quadratic optimal control problems. Our main contribution is showing that the two-point boundary value problem at the heart of \citep{gerdts2008global,chen2011numerical} can be solved using differential Riccati equations. This insight enables the usage of highly efficient and widely available ordinary differential equations (ODE) solvers to implement the algorithm resulting in an efficient and robust indirect method; a fact we demonstrate using numerical examples.

\textit{Notation:} Given a scalar function $f:\R^n\to\R$, let $\nabla f:\R^n\to\R^n$ be its gradient and let $\nabla^2f:\R^n\to\R^{n\times n}$ be its Hessian. Let $h:[0,T]\to\R^n$ be a measurable function defined ``almost everywhere'' (a.e.) in its domain $[0,T]$ and let 
\begin{equation}
    \|h\|_\infty = \esssup_{t\in \domain}~\|h(t)\|.
\end{equation}
The function $h$ belongs to the Lebesgue space $L_n^\infty$ if $\|h\|_\infty<\infty$. If $h(t)$ is a.e. differentiable with derivative $\dot h\!:\![0,T]\!\to\!\R^n$, the function $h$ belongs to the Sobolev space $\sob{\infty}{n}$ if $\max\{\|h\|_\infty,\|\dot h\|_\infty\}<\infty$. Given $h\in L_n^\infty$ the $\mc L_2$ norm is
\begin{equation}
    \|h\|_2^2=\int_0^Th(t)^\top h(t) dt.
\end{equation}
The space of linear operators between the Banach spaces $Z$ and $Y$ is denoted by $L(Z,Y)$. Let $\|\cdot\|_Z$ and $\|\cdot\|_Y$ be norms for the spaces $Z$ and $Y$; the induced norm of the linear mapping $F\in L(Z,Y)$ satisfies $\|F(z)\|_Y\leq \|F\|_{L(Z,Y)}~\|z\|_Z$ 

\section{Problem Setting}
The objective of this paper is to solve constrained optimal control problems in the form
\begin{subequations}\label{eq:OCP}
\begin{align}
\min_{u,x} & ~~J(x(T))
+ \int_0^T 
\ell(x(t),u(t)) dt \!\!\!\!\!\!\!\!\!\!\!\!\!\!\!\! \label{eq:OCP_cost}\\
\rm{s.t.} \ \ &   u\in\LP\infty m,~~x\in\sob\infty n \\
&x(0)=x_0,\\
& \dot{x}(t) = A x(t) + B u(t),& \AE \label{eq:OCP_dyn}\\            
              & c(x(t),u(t)) \leq 0, & \AE \label{eq:OCP_cstr}\\
              \notag
\end{align}
\end{subequations}
where $u \in \LP{\infty}{m}$ is the input signal and $x\in\sob{\infty}{n}$ is the state trajectory. We make the following assumptions to ensure the optimization problem is well-posed\medskip

\begin{ass}\label{ass:dynamics} \label{ass:costs} \label{ass:constraints}
The optimal control problem satisfies the following conditions:
\begin{enumerate}[(i)]
    \item At least one minimizer $(\bar x, \bar u) \in \sob\infty n \!\times\! \LP\infty m$ exists;
    \item The terminal cost $J:\R^n\to\R$ and incremental cost $\ell\!:\R^n\times\R^m\to\R$  are twice continuously differentiable;
    \item The terminal cost $J$ is convex and the incremental cost $\ell$ is strongly convex;
    \item Each element of $c:\R^n\times\R^m\to\R^p$ is convex and twice continuously differentiable;
    \item The pair $(A,B)$ is stabilizable;
    \item The constraints and dynamics satisfy the \citep{malanowski2003normality} assumptions of Linear Independence (A1) and Controllability (A2). 
\end{enumerate}
\end{ass}

Assumptions~\ref{ass:constraints}.(i) - \ref{ass:constraints}.(iv) are typical convexity assumptions for linear-quadratic problems and Assumption~\ref{ass:constraints}.(v) ensures that the Riccati equations associated with \eqref{eq:OCP} are solvable. All are easy to verify a-priori. Assumption~\ref{ass:constraints}.(vi) are constraint qualifications that ensure the dual variables associated with \eqref{eq:OCP_dyn} and \eqref{eq:OCP_cstr} are well-behaved and belong to $\sob\infty n$ and $\LP\infty p$ respectively. The actual conditions are technical and difficult to check, but boil down to requiring that the constraints are feasible and consistent.

\begin{remark}
    If the pair $(A,\nabla^2_{xx}\ell(x,u))$ is detectable $\forall x,u$, the assumption on the incremental cost $\ell$ can be relaxed to ``convex in $x$ and strongly convex in $u$''.
\end{remark}


\section{Solver Design}\label{sec:solver}
Our objective is to derive an indirect algorithm for solving the continuous time OCP \eqref{eq:OCP}. To this end, consider the Hamiltonian $H : \reals^n \times \reals^m \times \reals^n \times \reals^p \to \reals$
\begin{equation}\label{eq:Hamiltonian}
    H(x,u,\lambda,\mu) = \ell(x,u) + \lambda^{\!\top}\! (Ax + Bu) + \mu^{\!\top}\! c(x,u),
\end{equation}
where $\lambda\!\in\!\sob{\infty}{n}$ is the co-state trajectory and the signal $\mu\!\in\!\LP{\infty}{p}$ is the dual variable associated with the inequality constraints. Under Assumption \ref{ass:costs}, the KKT conditions
\begin{subequations}\label{eq:KKT}
\begin{align}
x(0) &=x_0,\\
\lambda(T)&= \nabla_x J(x(T)),\\
\dot{x}(t)&= A x(t) + B u(t), & \AE\label{eq:dynamics}\\
-\dot{\lambda}(t)&= \nabla_x H(x(t),u(t),\lambda(t),\mu(t)), &\AE\\
0&=\nabla_uH(x(t),u(t),\lambda(t),\mu(t)), &\AE\\
 c(x(t),u(t))&\leq 0, & \AE \label{eq:comp1}\\
\mu(t)&\geq0, & \AE \\
0 &=\mu(t)^\top c(x(t),u(t)),&\AE\label{eq:comp3}
\end{align}
\end{subequations}
are necessary optimality conditions for \eqref{eq:OCP}, as shown in [Theorem 4.3]~\citep{malanowski2003normality}.

Our goal is to design an algorithm that ``solves'' the KKT conditions \eqref{eq:KKT} using standard ODE solvers. We begin by following the same approach as \citep{gerdts2008global}: i.e., use a nonlinear complimentarity function to replace the complimentarity conditions \eqref{eq:comp1} - \eqref{eq:comp3} and transform \eqref{eq:KKT} into an infinite dimensionsal non-smooth rootfinding problem\footnote{Our choice of a semi-smooth reformulation (over e.g., an interior-point or active set approach) is motivated by the success of these methods for discrete-time OCPs e.g., \citep{dom2018,liao2020fbstab,lowenstein2024qpalm}.}.

A non-linear complimentarity problem (NCP) function $\phi:\R^2\to\R$ is a continuous function that satisfies the implication
\begin{equation} \label{eq: embedding}
    \phi(a,b)=0 \iff  a,b \geq 0,~ ab = 0.
\end{equation}
As detailed in \citep{sun1999ncp}, there are many NCP functions in the literature. In this paper, we use the ``min'' NCP function $\phi(a,b) = \min(a,b)$ and the Fischer-Burmeister Function $\phi(a,b) = a + b - \sqrt{a^2 + b^2}$. These functions are not continuously differentiable, but are \textit{semismooth}, \citep{mifflin1977semismooth}, which means they are Lipschitz continuous with well-behaved generalized gradients.

Applying an NCP function element-wise to the KKT conditions \eqref{eq:KKT}, we obtain the equivalent conditions
\begin{subequations} \label{eq:pmp_nl}
\begin{align}
x(0)-x_0 &=0,\qquad&\label{eq:x0}\\
\lambda(T)-\nabla_x J(x(T))&=0,\label{eq:lT}\\
\dot{x}(t)-Ax(t)-Bu(t)&=0,& \AE\label{eq:dyn}\\
\dot{\lambda}(t)+\nabla_x H(x(t),u(t),\lambda(t),\mu(t))&=0 , &\AE\label{eq:Dl}\\
\nabla_u H(x(t),u(t),\lambda(t),\mu(t))&=0,&\AE\label{eq:Du}\\
\phi(\mu(t),-c(x(t),u(t)))&=0,&\AE\label{eq:phi}
\end{align}
\end{subequations}
Given $z = (x,u,\lambda,\mu)$, these can be written compactly as
\begin{equation}\label{eq:F=0}
    F(z) = 0,
\end{equation}
where $F:Z \to Y$ is a mapping between the Banach spaces
\begin{equation}
    Z = \sob{\infty}{n} \times \LP{\infty}{m} \times \sob{\infty}{n} \times \LP{\infty}{p}
\end{equation}
and
\begin{equation}
    Y = \reals^n \times \reals^n \times \LP{\infty}{n} \times \LP{\infty}{n} \times \LP{\infty}{m} \times  \LP{\infty}{p}.
\end{equation}


The rootfinding problem \eqref{eq:F=0} can be solved using the \citep{ulbrich2002semismooth} semi-smooth variant of Newton's method, which results in an iterative process
\begin{subequations}\label{eq:Newton}
\begin{equation} 
    z^+ =z + \tilde z, 
\end{equation}
where the update $\tilde z$ is obtained by solving
\begin{equation}\label{eq:gen_Newt}
     F(z) + G \tilde z = 0,\qquad G \in \partial F(z)
\end{equation}
\end{subequations}
and $\partial F: Z \rightrightarrows L(Z,Y)$ is the Clarke generalized Jacobian \citep[Definition 3.35]{ulbrich2002nonsmooth} of $F$. This iteration converges to a solution of the KKT conditions \eqref{eq:KKT} under mild conditions. 
\begin{thm}\label{thm:Convergence}\citep[Theorem 2.6]{gerdts2008global}
Let Assumption~\ref{ass:dynamics} hold and let $\bar z\in Z$ satisfy $F(\bar z) = 0$. Further, suppose that there exist $\veps > 0$ and $\delta > 0$ such that, if $\|z - \bar z\|_Z \leq \veps$, all $G \in \partial F(z)$ are non-singular with $\|G^{-1}\|_{L(Y,Z)} \leq \delta$. Then, the Newton iteration \eqref{eq:Newton} converges to $\bar z$ superlinearly.
\end{thm}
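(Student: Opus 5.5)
The plan is to follow the standard local semismooth Newton argument of \citep{ulbrich2002semismooth}. The one structural ingredient is that $F$ is semismooth at $\bar z$ with respect to $\partial F$:
\begin{equation*}
\sup_{G\in\partial F(\bar z+h)} \|F(\bar z+h)-F(\bar z)-Gh\|_Y = o(\|h\|_Z) \quad \text{as } \|h\|_Z\to 0 .
\end{equation*}

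\textbf{Step 1: semismoothness of $F$.} I would prove the display above component by component.
\begin{itemize}
\item The boundary components \eqref{eq:x0}--\eqref{eq:lT} and the dynamics \eqref{eq:dyn} are affine or $C^2$ in $z$, so they are Fr\'echet differentiable and trivially semismooth. This uses the embedding $\sob\infty n\hookrightarrow C$, which makes $x(T)$ well defined.
\item The components \eqref{eq:Dl}--\eqref{eq:Du} are superposition operators built from the $C^1$ maps $\nabla_x H$ and $\nabla_u H$ (Assumption~\ref{ass:costs}(ii),(iv)). They are Fr\'echet differentiable from $Z$ into $\LP\infty{}$, because the relevant Taylor remainders are uniform on bounded sets of the $L^\infty$ balls.
\item The only genuinely nonsmooth component is \eqref{eq:phi}, the superposition $t\mapsto\phi(\mu(t),-c(x(t),u(t)))$. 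Here I would use that $\phi$ is semismooth on $\R^2$ and that the inner map $(x,u,\mu)\mapsto(\mu,-c(x,u))$ is continuously Fr\'echet differentiable into $\LP\infty{}$. From these I would deduce a pointwise estimate that holds a.e. in $t$.
\end{itemize}

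\textbf{Step 2: Newton error recursion.} Write $z^+-\bar z = z-\bar z+\tilde z$, and use $F(\bar z)=0$ together with \eqref{eq:gen_Newt}. This gives
\begin{equation*}
z^+-\bar z = G^{-1}\bigl(G(z-\bar z) - F(z)+F(\bar z)\bigr).
\end{equation*}
For $\|z-\bar z\|_Z\le\veps$, the uniform bound $\|G^{-1}\|_{L(Y,Z)}\le\delta$ and Step 1 give
\begin{equation*}
\|z^+-\bar z\|_Z\le \delta\, o(\|z-\bar z\|_Z).
\end{equation*}
Now shrink $\veps$ so that the $o(\cdot)$ term is at most $\tfrac{1}{2\delta}\|z-\bar z\|_Z$. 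Then every iterate stays in the ball, the error halves at each step so the iterates converge to $\bar z$, and the ratio $\|z^+-\bar z\|_Z/\|z-\bar z\|_Z$ tends to zero. This is superlinear convergence from any initial point in a sufficiently small ball.

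\textbf{Main obstacle.} I expect the difficulty to be the pointwise-to-uniform passage in Step 1 for the $\phi$ component. In $\LP\infty{}$ to $\LP\infty{}$ settings, superposition operators with nonsmooth outer functions are typically not semismooth. Ulbrich's theory requires a norm gap ($L^q\to L^r$ with $q>r$). Pointwise semismoothness of $\phi$ gives only a.e. $o(|h(t)|)$ bounds, and these need not be uniform in $t$.

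I would try two routes, in this order.
\begin{itemize}
\item \emph{Exploit piecewise structure.} For $\phi=\min$, the remainder vanishes unless the sign of $\mu-(-c)$ changes between $\bar z(t)$ and $z(t)$. On that switching set, the remainder is bounded by $\|h(t)\|$ times the jump in the chosen generalized-gradient element. One then needs a measure or strict-complementarity-type argument, which is what \citep{gerdts2008global} supply, possibly after measuring the residual in an $L^2$-type norm.
\item \emph{Invoke the cited result.} Failing that, I would simply cite \citep[Theorem 2.6]{gerdts2008global}, where exactly this semismoothness of $F$ for the $\min$ and Fischer--Burmeister functions is established under Assumption~\ref{ass:constraints}(vi). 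Steps 2--3 then go through verbatim.
\end{itemize}
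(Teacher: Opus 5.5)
There is a genuine gap, and it sits in Step~1. Your Step~2 is the standard local argument and is correct once the remainder estimate of Step~1 holds. The paper itself offers nothing beyond citing \citep[Theorem 2.6]{gerdts2008global}, so your second fallback is exactly its route.

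Step~1 is not a technicality: with $L^\infty$ norms on $Z$ and $Y$, the estimate fails at perfectly regular solutions. Take $\phi=\min$ and a solution with an active arc of $c_i$ (so $c_i(\bar x,\bar u)=0$ there) on which $\bar\mu_i$ is continuous and vanishes at a junction point. An example is $\dot x=u$, $\ell=\tfrac12(x^2+u^2)$, $J=0$, $c=-u-1$ with $x_0$ large, where $\bar\mu=\max(0,\bar\lambda-1)$ is Lipschitz. Perturb only $\mu_i$ by the constant $h=-\varepsilon$; every other component of $F$ is affine in $\mu$. On $\{t:0<\bar\mu_i(t)<\varepsilon\}$, a set of positive measure, $\phi$ is differentiable at $(\bar\mu_i-\varepsilon,0)$ with $\partial_\mu\phi=1$. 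So for every $G\in\partial F(\bar z+h)$ the remainder there equals $(\bar\mu_i-\varepsilon)-0+\varepsilon=\bar\mu_i(t)$. Its essential supremum is thus $\varepsilon=\|h\|_Z$, not $o(\|h\|_Z)$. Fischer--Burmeister fails in the same way, since $\phi_{FB}(a,0)=2\min(a,0)$ on the active arc. The pointwise-to-uniform obstacle you flag is therefore a real obstruction under Assumption~\ref{ass:dynamics}, not a missing estimate.

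Neither fallback repairs this.
\begin{itemize}
\item The piecewise route needs either uniform strict complementarity or a norm gap. Uniform strict complementarity, $|\bar\mu_i+c_i(\bar x,\bar u)|\ge\kappa>0$ a.e., is an extra hypothesis, and it is violated at every junction of the kind above.
\item Measuring the $\phi$-residual in $L^r$, $r<\infty$, does give $o(\|h\|_Z)$, because the switching set has vanishing measure. But then Step~2 no longer closes with the $L^\infty$ bound $\|G^{-1}\|_{L(Y,Z)}\le\delta$. By \eqref{eq:comp-deriv}, wherever $(\eta_i,\gamma_i)=(1,0)$ the component $\tilde\mu_i$ equals the $\phi$-residual pointwise. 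So $G^{-1}$ does not map $L^r$ into $L^\infty$, and you would need a smoothing step as in \citep{ulbrich2002semismooth}.
\item The citation route coincides with the paper's, but the justification you attach is unsupported. Assumption~\ref{ass:constraints}(vi) is a constraint qualification ensuring regular multipliers, and the failure above occurs precisely at a solution with Lipschitz multipliers.
\end{itemize}
In the abstract local theorem you are reconstructing, semismoothness of $F$ at $\bar z$ with respect to $\partial F$ is a hypothesis. The statement as written omits it. A complete proof must add it (or a sufficient condition such as uniform strict complementarity) rather than derive it from Assumption~\ref{ass:dynamics}.
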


\begin{remark}
As noted in \citep{gerdts2008global}, it is straightforward to globalize \eqref{eq:Newton} using $ \|F(z)\|_2^2$ as a merit functions.
\end{remark}


\subsection{Solving for Newton-steps using a Riccati equation}
The key step in developing an practical Newton-type method is proposing a reliable and efficient approach for solving the Newton-step equation \eqref{eq:gen_Newt}. In this section, we provide a Riccati-based method that can be readily implemented using numerical integration tools.

Specializing the semi-smooth Newton update \eqref{eq:Newton} to the Banach space mapping \eqref{eq:pmp_nl} yields an affine two-point boundary value problem with boundary conditions
\begin{subequations} \label{eq:newton_step_expanded}
\begin{align}
\tilde x(0) &= r_1,\\
\tilde \lambda(T) - \nabla_x^2J(x(T)) \tilde x(T) &= r_2,
\end{align}
and differential-algebraic equations\footnote{We have used operator notation to shorten the otherwise unwieldy equations, e.g., $H''_{xx} \tilde x \iff \nabla^2_{xx}H(z(t)) \tilde x(t)$ almost everywhere.}
\begin{gather}
 \dot{\tilde x} = A \tilde x + B \tilde x + r_3,\qquad\qquad\qquad\qquad~\\
-\dot{\tilde\lambda} = H''_{xx}\tilde x + H''_{xu} \tilde u + H''_{x\lambda} \lambda + H''_{x\mu}\tilde \mu + r_4,\\
H''_{ux}\tilde x+ H''_{uu}\tilde u + H''_{u\lambda}\lambda + H''_{u\mu}\tilde \mu = r_5,\\
\qquad\qquad\partial_x \phi \tilde x +\partial_u \phi \tilde u+ \partial_\mu \phi \tilde\mu = r_6, \label{eq:KKT_mu}
\end{gather}
\end{subequations}
where the residuals follow directly from \ref{eq:pmp_nl}
\begin{equation}\label{eq:residuals}
     \begin{bmatrix}
    r_1\\ r_2\\ r_3(t) \\r_4(t) \\r_5(t)\\ r_6(t)
\end{bmatrix} = \begin{bmatrix}
    x_0-x(0)\\
    \nabla_x J(x(T))-\lambda(T)\\
    Ax(t)+Bu(t)-\dot{x}(t)\\
    \nabla_x H(z(t))+\dot{\lambda}(t)\\
    -\nabla_uH(z(t))\\
    -\phi(z(t))
\end{bmatrix}.
\end{equation}

The first step in the derivation is to eliminate the dependency on the Lagrange multipliers $\tilde\mu$. The generalized derivatives in \eqref{eq:KKT_mu} can be expanded into the elementwise equations
\begin{equation} \label{eq:comp-deriv}
    \gamma_i(t) \left(\nabla_x^\top\!c_i(t) \tilde x + \nabla_u^\top\!c_i(t)
    \tilde u\right)+ \eta_i(t)\tilde\mu = -\phi_i(z)
\end{equation}
where 
\begin{equation}
     (\eta_i(t),\gamma_i(t)) \in \partial \phi_i(\mu_i(t),-c_i(x(t),u(t)))
\end{equation} 
are elements of the generalized Jacobian\footnote{For the FB function, we have
\begin{multline*}
    \partial \phi_{FB}(\mu,-c) = \begin{cases}
    \left\{\left (1\!-\!\frac{\mu}{\sqrt{\mu^2 + c^2}}, -1\!-\!\frac{c}{\sqrt{\mu^2 + c^2}} \right) \right\} & \textrm{if}~(a,b) \neq 0\\
    \{(\eta,\gamma)~|~ (1-\eta )^2 + (1+\gamma)^2 = 1\} & \textrm{if}~ (a,b) = 0,
    \end{cases}
\end{multline*}
and for the min function, we have
\begin{equation*}
    \partial \phi_{min}(\mu,-c) = \begin{cases}
    \{(1,0)\} & \textrm{if}~\mu > -c\\
    \{(0,-1)\} & \textrm{if}~ \mu < -c\\
    \{\eta\geq0,\gamma\leq0~|~\eta - \gamma = 1\} & \textrm{if}~ \mu = -c.
    \end{cases}\!\!\qquad\qquad\qquad
\end{equation*}} of the NCP function $\phi$.

Rearranging equation \eqref{eq:comp-deriv}, we can isolate $\tilde \mu$
\begin{equation}\label{eq: replace mu}
\tilde\mu_i(t)\!=\!\alpha_i(t)+\beta_i(t)(\nabla_x^\top\!c_i(t)\tilde x(t)+\nabla_u^\top\!c_i(t)\tilde u(t)),
\end{equation}
where 
\begin{equation}\label{eq: Barrier}
    \alpha_i(t)=\frac{-\phi_i(t)}{\eta_i(t) + \delta},\quad \beta_i(t)=\frac{-\gamma_i(t)}{\eta_i(t) + \delta},
\end{equation}
and $\delta > 0$ is a small regularization parameter to guard against singularity. Assuming $r_3(t)=0$ almost everywhere and noting that \eqref{eq:Hamiltonian} implies
\begin{equation}
     \displaystyle H''_{x\mu}(t)\tilde \mu(t)=\sum_{i=1}^{n_c}\nabla_x c_i(t)\tilde\mu_i(t),
\end{equation}
the Newton-step system \eqref{eq:newton_step_expanded} can be re-written as
\begin{subequations}\label{eq:LQR}
\begin{align}
\tilde x(0) &=x_0-x(0),\qquad& \label{eq: x0}\\
\lambda^{\!+}\!(T)&=\nabla_x^2 J(x(T))\tilde x(T)+\nabla_x J(x(T)),\label{eq: lT}\\
\dot{\tilde x}(t)&=A\tilde x(t)+B\tilde u(t), \label{eq:dot x}\\
-\dot\lambda^{\!+}\!(t)&=\!Q(t)\tilde x(t)\!+\! S(t)\tilde u(t)\!+\! A^\top\lambda^{\!+}\!(t)\!+\!q(t), \label{eq:dot l}\\
0&=\!S^\top\!(t)\tilde x(t)\!+\! R(t)\tilde u(t)\!+\! B^\top\! \lambda^{\!+}\!(t)\!+\!r(t),\label{eq:u}
\end{align}
\end{subequations}
with
\[
\begin{array}{rl}
    Q(t) &\!\!=\! \nabla_{xx}^2\ell(t)\!+\!\sum_{i}\mu_i(t)\nabla^2_{xx}c_i(t)+\beta_i(t)\nabla_x c_i(t)\nabla_x^\top c_i(t),\\
    R(t) &\!\!=\! \nabla_{uu}^2\ell(t)\!+\!\sum_{i}\mu_i(t)\nabla^2_{uu}c_i(t)+\beta_i(t)\nabla_u c_i(t)\nabla_u^\top c_i(t),\\
    S(t) &\!\!=\! \nabla_{xu}^2\ell(t)\!+\!\sum_{i}\mu_i(t)\nabla^2_{xu}c_i(t)+\beta_i(t)\nabla_x c_i(t)\nabla_u^\top c_i(t),\\
    q(t) &\!\!=\! \nabla_x\ell(t)+\sum_{i}(\mu_i(t)+\alpha_i(t))\nabla_xc_i(t),\\
    r(t) &\!\!=\! \nabla_u\ell(t)+\sum_{i}(\mu_i(t)+\alpha_i(t))\nabla_uc_i(t),
\end{array}
\]
where we used $\lambda^{\!+}\!=\lambda+\tilde\lambda$ to write the equation directly in terms of $\lambda^+$. Since Equation \eqref{eq:dot x} is valid only if $r_3(t)=0$, the following proposition ensures recursive feasibility if the initial guess $(x_0,u_0)$ satisfies $\dot x_0=Ax_0+Bu_0$.\medskip

\begin{prop}\label{prop: Traj}
Let $(x,u)\!\in\! W^{1,\infty}_n\!\times\! L^{\infty}_m$ satisfy $\dot x\!=\!Ax\!+\!Bu$, and let $(\tilde x,\tilde u)\in W^{1,\infty}_n\times L^{\infty}_m$ satisfy \eqref{eq:dot x}. Then, given an arbitrary scalar $\gamma\in(0,1]$, the Newton update 
\begin{equation}
    x^+=x+\gamma\tilde x,\qquad u^+=u+\gamma\tilde u,
\end{equation}
satisfies $\dot x^+=Ax^++Bu^+$.
\end{prop}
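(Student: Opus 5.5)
The argument is a direct linearity computation. First I will check that $x^+$ and $u^+$ lie in the right spaces. Since $W^{1,\infty}_n$ and $L^\infty_m$ are vector spaces and $\gamma$ is a scalar, $x^+=x+\gamma\tilde x\in W^{1,\infty}_n$ and $u^+=u+\gamma\tilde u\in L^\infty_m$.

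Next I will use that weak differentiation is linear. Both $x$ and $\tilde x$ are a.e. differentiable with essentially bounded derivatives. Hence $x^+$ is a.e. differentiable, with
\begin{equation*}
\dot x^+(t)=\dot x(t)+\gamma\dot{\tilde x}(t)
\end{equation*}
for almost every $t\in[0,T]$. The exceptional set is contained in the union of the two null sets on which $\dot x$ or $\dot{\tilde x}$ fails to exist or fails to satisfy its equation. A finite union of null sets is null, so the identity holds a.e.

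The main step is substitution. The hypothesis gives $\dot x=Ax+Bu$ a.e., and \eqref{eq:dot x} gives $\dot{\tilde x}=A\tilde x+B\tilde u$ a.e. Substituting both and regrouping by distributivity of the constant matrices $A$ and $B$ yields
\begin{equation*}
\dot x^+=Ax+Bu+\gamma(A\tilde x+B\tilde u)=A(x+\gamma\tilde x)+B(u+\gamma\tilde u)=Ax^++Bu^+
\end{equation*}
almost everywhere.

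I expect no real obstacle. The only point needing care is the "almost everywhere" bookkeeping, which the union-of-null-sets remark above handles. I will also note that the restriction $\gamma\in(0,1]$ plays no role, since the identity holds for any real $\gamma$; the restriction only matters for globalization. Finally, I will remark that by induction the property persists across all Newton iterations. This justifies setting $r_3=0$ in deriving \eqref{eq:LQR} whenever the initial guess is dynamically consistent.
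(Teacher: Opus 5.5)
Your proposal is correct and follows the same approach as the paper: write $\dot x^+=\dot x+\gamma\dot{\tilde x}$, substitute $\dot x=Ax+Bu$ and \eqref{eq:dot x}, and regroup by linearity to get $A(x+\gamma\tilde x)+B(u+\gamma\tilde u)$. Your added points (handling the almost-everywhere null sets, noting that any real $\gamma$ works, and the induction across Newton iterations) are accurate refinements of the paper's one-line argument.
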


\begin{pf}
    The result follows directly from \eqref{eq:dot x}, since
    \begin{equation}
        \dot x+\gamma \dot{\tilde x}=A(x+\gamma \tilde x)+B(u+\gamma \tilde u).
    \end{equation}
    \QED
\end{pf}

A closer inspection of \eqref{eq:LQR} reveals that they are the necessary conditions for an unconstrained time-varying linear-quadratic regulator problem
\begin{subequations}\label{eq:LQR-OCP}
\begin{align}
\min_{\tilde x,\tilde u} & ~~ \hat J(\tilde x(T)) + \int_0^T  \hat\ell(\tilde x(t),\tilde u(t),t) dt \!\!\!\!\!\!\!\!\!\!\!\!\!\!\!\!\\
\rm{s.t.} \ \ & \tilde x(0)= x(0) - x_0,\\
& \dot{\tilde x}(t) = A \tilde x(t) + B \tilde u(t),& \AE
\end{align}
\end{subequations}
with $\hat J(\tilde x) = \hat \tilde x^\top  P \tilde x + \tilde x^\top  p$ and 
\begin{multline}
     \hat\ell(\tilde x(t),\tilde u(t),t) = \\ \begin{bmatrix}\tilde x(t)\\\tilde u(t) \end{bmatrix}^\top
\begin{bmatrix}
    Q(t) & S^\top (t)\\
    S(t) & R(t)
\end{bmatrix}
\begin{bmatrix}\tilde x(t)\\\tilde u(t)  \end{bmatrix}
+ \begin{bmatrix}\tilde x(t)\\\tilde u(t) \end{bmatrix}^\top \begin{bmatrix}
    q(t)\\ r(t)
\end{bmatrix},
\end{multline}
where the inequality constraints have been incorporated into the problem in the form of ``barrier'' cost terms $\sum_i (\mu_i + \alpha_i) \nabla c_i$ and $\sum_i \mu_i \nabla^2 c_i + \beta_i (\nabla c_i) (\nabla c_i)^T$. Newton's method can thus be interpreted as an adaptive search for the linear and quadratic weights of the cost function.

The main advantage of \eqref{eq:LQR-OCP} is that it can be solved analytically. From \eqref{eq:u} we obtain
\begin{equation}\label{eq: replace u}
    \tilde u(t) = -R(t)^{-1}(r(t)+B(t)^\top\lambda^{\!+}\!+S(t)^\top\!\tilde x(t)).
\end{equation} 
To address the costates, consider the affine mapping
\begin{equation}\label{eq: affine map}
    \lambda^+(t) = P(t)\tilde x(t)+p(t),
\end{equation}
and its time derivative
\begin{equation}\label{eq: l_dot}
    \dot\lambda^+(t) = \dot P(t)\tilde x(t)+P(t)\dot{\tilde x}(t)+\dot p(t).
\end{equation}
By replacing \eqref{eq:dot x}, \eqref{eq:dot l}, and \eqref{eq: replace u} into \eqref{eq: l_dot}, and factoring out $\tilde x$, we obtain the differential equations
\begin{subequations}\label{eq:Riccati}
\begin{align}
-\dot P &= Q+A^\top\! P\!+\!PA-(PB\!+\!S)R^{-1}(B^\top\!P\!+\!S^\top\!), \label{eq:Riccati1}\\
-\dot p &= q+A^\top\! p-(PB\!+\!S)R^{-1}(B^\top\! p+r),\label{eq:Riccati2}\\
P&(T)=J''_{xx}(x(T)),\\
p&(T)=\nabla_x J(x(T)),
\end{align}
\end{subequations}
which can be solved backwards in time. The state trajectory can then be obtained by solving
\begin{subequations}\label{eq: Optimal Traj}
\begin{align}
    \dot{\tilde x}&=(A-BR^{-1}(B^\top\!P+S))\tilde x-BR^{-1}(B^\top\!p+r),\\
    \tilde x&(0)=x(0)-x_0,
\end{align}
\end{subequations}
forward in time. Once $\tilde x(t)$ is known, the remaining signals $\tilde u(t)$ and $\lambda^{\!+}\!(t)$ can be obtained from \eqref{eq: replace u} and \eqref{eq: affine map}, respectively.



\subsection{Stopping Condition}\label{ssec:Stop}
Since the problem \eqref{eq:KKT} is infinite dimensional, the choice of stopping criterion is a slightly more involved than in the finite dimensional case. For this method, we use 
\begin{equation} \label{eq:stopping}
    \|F(z)\|_2 \leq \epsilon_t
\end{equation}
as we found using a $2$ norm (instead of e.g., $\|F(z)\|_\infty$) is more numerically stable. We will show in Section~\ref{ss:analysis} that this leads to a well-defined stopping condition.


Given the residual vector partitioned in \eqref{eq:residuals}, our approach is designed to automatically enforce $r_1=0$, $r_2=0$, and $r_3(t)=0$ at every iteration. Therefore, $\|F(z)\|_2 =\sqrt{\|r_4(z)\|_2^2+\|r_5(z)\|_2^2+\|r_6(z)\|_2^2}$. By replacing \eqref{eq: l_dot} and \eqref{eq:Riccati1} in \eqref{eq:dot l}, we obtain
\begin{equation*}
    \|r_4\|_2^2 ~=\int_0^T  \!\!\!\left\|Q(t)\tilde x(t)+S(t)\tilde u(t)+{\textstyle\sum_i}\alpha_i(t)\nabla_xc_i(t)\right\|^2dt.
\end{equation*}
Likewise,
\begin{align*}
\|r_5\|_2^2 ~=\displaystyle\int_0^T &\|\nabla_u\ell(t)+B^\top\lambda(t)+{\textstyle\sum_i}\mu_i(t)\nabla_u c(t)\|^2dt,\\
\|r_6\|_2^2 ~=\displaystyle\int_0^T &\|\phi(t)\|^2dt.
\end{align*}
which can be easily computed via quadrature. 

\subsection{Implementation}
Our proposed algorithm is summarized in Algorithm \ref{alg:newton}. The core of the algorithm is a semi-smooth Newton method \eqref{eq:Newton} with a Riccati-based solver. The main computational burden of the algorithm is solving initial value problems and evaluating 1D integrals numerically, for which there are plenty of mature and widely available software packages, e.g., \citep{hindmarsh2005sundials}.

\begin{algorithm}[ht] 
  \caption{Semi-smooth Newton Step}
  \label{alg:newton}
  \begin{algorithmic}[1]
  \Require
      \Statex Problem Setting: $x_0$, $T$, $J(x)$, $\ell(x,u)$, $c(x,u)$, $A$, $B$
      \Statex Gradients/Hessians of: $J(x)$, $\ell(x,u)$, $c(x,u)$
      \Statex NCP function: $\phi(a,b)$, $\eta(a,b)$, $\gamma(a,b)$
      \Statex Solver parameters: $\delta$, $\epsilon_t$
      \Statex 
\Statex \hspace{-20pt} \textbf{Main:}
      \State $[x_0,u_0,\lambda_0,\mu_0]=$\textsc{Initialize}
    \While{\textsc{Residual}$(x_0,u_0,\lambda_0,\mu_0)\geq \epsilon_t$}
      \State $[\tilde x,\tilde u,\tilde\lambda,\tilde\mu]=$\textsc{NewtonUpdate}$(x,u,\lambda,\mu)$
      \State $[x_0, u_0 ,\lambda_0,\mu_0]\leftarrow[x_0, u_0 ,\lambda_0,\mu_0]+[\tilde x, \tilde u ,\tilde \lambda,\tilde \mu]$
      \EndWhile
    \Statex
\Statex \hspace{-20pt} \textbf{Functions:}
\Function{$[x_0,u_0,\lambda_0,\mu_0]=$Initialize}{}
      \State Use \eqref{eq:LQR} to compute $[Q,R,S,q,r]$, with $\mu=0$  
      \State Integrate \eqref{eq:Riccati} backwards in time to find $[P,p]$ 
      \State Integrate \eqref{eq: Optimal Traj} forward in time to find $\tilde x$
      \State Use \eqref{eq: replace u}-\eqref{eq: affine map} to compute $[\tilde u,\lambda^+]$
      \State \textbf{return} $[\tilde x,\tilde u,\lambda^+,0]$
    \EndFunction
      \Statex
    \Function{$[\tilde x,\tilde u,\tilde\lambda,\tilde\mu]=$NewtonUpdate}{$x,u,\lambda,\mu$}
      \State Use \eqref{eq: Barrier}, \eqref{eq:LQR} to compute $[Q,R,S,q,r]$
      \State Integrate \eqref{eq:Riccati} backwards in time to find $[P,p]$
      \State Integrate \eqref{eq: Optimal Traj} forward in time to find $\tilde x$
      \State Use \eqref{eq: replace u}-\eqref{eq: affine map} and \eqref{eq: replace mu} to compute $[\tilde u,\lambda^+,\tilde\mu]$
      \State \textbf{return} $[\tilde x,\tilde u,\lambda^+\!-\!\lambda,\tilde\mu]$
    \EndFunction
      \Statex
      \Function{Residual}{$x,u,\lambda,\mu$}
      \State Use \eqref{eq: Barrier}, \eqref{eq:LQR} to compute $[Q,R,S,\alpha]$
      \State Use Subsection \ref{ssec:Stop} to compute $\|r_4\|_2^2$, $\|r_5\|_2^2$, $\|r_6\|_2^2$
      \State \textbf{return} $\|r_4\|_2^2+\|r_5\|_2^2+\|r_6\|_2^2$
    \EndFunction
  \end{algorithmic}
\end{algorithm}

\section{Analysis} \label{ss:analysis}
In this section, we show that Algorithm~\ref{alg:newton} is well-defined, terminates finitely, and generates a sequence that satisfies the first-order necessary conditions \eqref{eq:KKT} of the optimal control problem \eqref{eq:OCP}.

\begin{thm}
Let Assumption~\ref{ass:dynamics} hold, let $\bar z\in Z$ satisfy the KKT conditions \eqref{eq:KKT}, and let $\{z_k\} = \{(x_k,u_k,\lambda_k,\mu_k)\} \subset Z$ denote the sequence generated by Algorithm~\ref{alg:newton} starting from $z_0 = (x_0,u_0,\lambda_0,\mu_0)\in Z$. Then there exists $\epsilon > 0$ such that if $\|z_0 - \bar z\|_\infty \leq \epsilon$ then:
\begin{enumerate}[i)]
    \item the sequence $\{z_k\}$ is well-defined;
    \item $\|z_k -\bar z\|_\infty \to 0$ at a superlinear rate;
    \item the stopping condition $\|F(z_k)\|_2 \leq \epsilon_t$ is satisfied in finite time.
\end{enumerate}
\end{thm}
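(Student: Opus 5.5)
We reduce all three claims to Theorem~\ref{thm:Convergence}: the Riccati construction of Section~\ref{sec:solver} is shown to produce exactly the semi-smooth Newton step \eqref{eq:gen_Newt} for some $G\in\partial F(z_k)$, with a uniform bound on $\|G^{-1}\|_{L(Y,Z)}$ near $\bar z$.

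\textbf{Well-definedness (i).} For $z$ close to $\bar z$ we show the Riccati system \eqref{eq:Riccati} has a bounded solution on $[0,T]$. For the min function $\eta_i,-\gamma_i\ge 0$ with $\eta_i-\gamma_i=1$. For the FB function the generalized gradient also satisfies $\eta_i\ge0$ and $\gamma_i\le0$. Hence $\beta_i=-\gamma_i/(\eta_i+\delta)\in[0,1/\delta]$. Since $\mu_i\ge0$ up to a perturbation of order $\epsilon$ around $\bar\mu\ge 0$, the weight
\[
\begin{bmatrix}Q&S\\S^\top&R\end{bmatrix}
\]
is the Hessian of the strongly convex $\ell$, plus $\sum_i\mu_i\nabla^2 c_i$ (positive semidefinite up to $O(\epsilon)$), plus the rank-one terms $\beta_i\nabla c_i\nabla c_i^\top\succeq0$. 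Assumption~\ref{ass:costs}(iii) then gives $R(t)\succeq\rho I$ uniformly once $\epsilon$ is small. The full weight is positive semidefinite and $P(T)=\nabla^2J\succeq0$. Standard LQR theory on the finite horizon (no finite escape time for a positive semidefinite differential Riccati equation) gives $P\in W^{1,\infty}$ with a bound depending only on $\|Q\|_\infty,\|R^{-1}\|_\infty,\|S\|_\infty,T$. The linear ODEs for $p$ and $\tilde x$ are then solvable in $W^{1,\infty}_n$. From \eqref{eq: replace u}, \eqref{eq: affine map} and \eqref{eq: replace mu} we get $\tilde u,\tilde\mu\in L^\infty$ and $\lambda^+\in W^{1,\infty}$, so $z_{k+1}\in Z$.

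\textbf{Identification with a Newton step and bounded inverse (ii).} Next we check that the computed $(\tilde x,\tilde u,\tilde\lambda,\tilde\mu)$ solves \eqref{eq:newton_step_expanded} with $G\in\partial F(z_k)$. This holds by construction, since the elimination of $\tilde\mu$ and $\tilde u$ and the affine ansatz are reversible. The $\delta$-regularization is the delicate point. Either we treat it as an inexact Newton step with perturbation $O(\delta)\|\tilde\mu\|$, or we note that $\eta_i+\delta$ is bounded away from zero wherever it matters. I would take the first route and accept that superlinear convergence holds in the limit $\delta\to0$. Alternatively, one can observe that the min function gives $\eta_i\in\{0,1\}$ on inactive/active sets, so exactness can be restored by setting $\delta=0$ where $\eta_i=1$.

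The uniform bound $\|G^{-1}\|\le\delta$ comes from the explicit solution formula. $P$, $p$, $\tilde x$, $\tilde u$ and $\tilde\mu$ depend linearly on the residual data $(r_1,\dots,r_6)$, with coefficients bounded by the uniform Riccati bounds from step (i). Assumption~\ref{ass:constraints}(vi), via \citep{malanowski2003normality}, handles degenerate constraints in which $\eta_i$ and $\gamma_i$ are both small. Proposition~\ref{prop: Traj} guarantees that $r_3=0$ along the iterates, so \eqref{eq:dot x} applies. Theorem~\ref{thm:Convergence} then yields superlinear convergence in $Z$, and hence in $\|\cdot\|_\infty$.

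\textbf{Finite termination (iii).} $F$ is Lipschitz from $Z$ to $Y$ with the $L^\infty$ norms, since $\phi$, $\nabla\ell$ and $\nabla c$ are locally Lipschitz. Therefore
\[
\|F(z_k)\|_2\le\sqrt{T\,(2n+m+p)}\,\|F(z_k)\|_Y\le L\sqrt{T(2n+m+p)}\,\|z_k-\bar z\|_Z\to0,
\]
so $\|F(z_k)\|_2\le\epsilon_t$ after finitely many iterations.

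\textbf{Main obstacle.} The main obstacle is the uniform invertibility bound near degenerate points. There the generalized Jacobian is set-valued, and the Riccati weights $\beta_i$ can jump to $1/\delta$. I would first try to bound $\|P\|_\infty$ using the monotonicity of the Riccati equation in $Q$: an upper bound follows by comparison with the cost of the zero-input policy, which is finite because the $\beta_i$ are bounded. A lower bound of $0$ holds because all weights are positive semidefinite.
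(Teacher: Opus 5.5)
Your outline matches the paper's proof. Both use solvability of \eqref{eq:Riccati} from strong convexity of $\ell$, $\mu\approx\bar\mu\ge 0$ and $\beta_i\ge 0$. Both bound the inverse by tracing how the residual data enter $p$, $\tilde x$, $\tilde u$, $\tilde\mu$ affinely, then appeal to Theorem~\ref{thm:Convergence}. Finite termination follows from $\|f\|_2\le\sqrt{T}\|f\|_\infty$ and Lipschitz continuity of $F$. The paper, however, invokes Theorem~\ref{thm:Convergence} for the $\delta$-regularized step as though it solved $F(z)+G\tilde z=0$ with $G\in\partial F(z)$. You rightly single this out as the weak point, but neither remedy you offer closes it, so your argument does not establish (ii).

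\textbf{Why the regularized step converges only linearly.} The computed step actually solves $F(z)+(G+\delta E)\tilde z=0$, where $E\tilde z$ places $\tilde\mu$ in the complementarity component. The inexact-Newton estimate therefore gives only $\|z^+-\bar z\|\le (o(1)+C\delta)\|z-\bar z\|$, i.e.\ linear convergence. This is sharp. Where constraint $i$ is strictly inactive, so $\phi_i=\mu_i$ and $(\eta_i,\gamma_i)=(1,0)$, \eqref{eq: replace mu} gives $\mu_i^+=\mu_i\,\delta/(1+\delta)$ whatever the other variables are. With $\delta$ fixed, no argument can deliver (ii).

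\textbf{Why your remedies fall short.} Setting $\delta=0$ where $\eta_i=1$ repairs exactly those inactive components. On components with $(\eta_i,\gamma_i)=(0,-1)$, i.e.\ $\phi_i=-c_i$, the elimination \eqref{eq: replace mu} needs $\delta>0$. There the step replaces the linearized equality $c_i+\nabla c_i^\top(\tilde x,\tilde u)=0$ by a quadratic penalty of weight $1/\delta$. In a static one-constraint example with Hessian $H$ and constraint gradient $a$, this contracts the error by exactly $\delta/(\delta+a^\top H^{-1}a)$, again only linearly.

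\textbf{What would be needed.} A superlinear result requires one of two things. Either take $\delta_k\to 0$ (e.g.\ $\delta_k=O(\|F(z_k)\|)$) together with a bound on $(G+\delta_k E)^{-1}$ that is uniform in $\delta_k$. Or compute an exact step on the $\eta_i=0$ set via a Riccati equation for LQ problems with pointwise mixed equality constraints. Your zero-input comparison bound on $P$ scales with $\max_i\beta_i\sim 1/\delta$, so it does not supply that uniformity. Establishing it, i.e.\ showing the penalized LQ problem converges to the equality-constrained one, is where Assumption~\ref{ass:constraints}(vi) must actually be used. It is not needed to rule out small $(\eta_i,\gamma_i)$: that cannot occur, since $\eta_i-\gamma_i\ge 2-\sqrt{2}$ for both NCP functions.
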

\begin{pf}
\ifarxiv
See Section \ref{sec:appendix} for proof. \QED
\else
See Appendix in \cite{arxiv} for proof. \QED
\fi
\end{pf}
The result demonstrates local superlinear convergence under the regularity conditions in Assumption~\ref{ass:dynamics} (strong convexity and constraint qualifications) and is a typical result for Newton-type methods.

\section{Simulation}
We apply Algorithm \ref{alg:newton} to the emergency lane change maneuver in \cite{skibik2022terminal}. The state of the system is $x^\top = [s ~
\phi ~ \beta ~ \omega~ \delta_f]^\top$, with $s$ being the lateral position of the
vehicle, $\phi$ the yaw angle, $\beta = \dot{s}/V_x$ the sideslip angle,
$\omega = \dot{\phi}$ the yaw rate, and $\delta_f$ is the front wheel steering angle. The control input $u=\dot\delta_f$ is the steering rate. Given a constant speed of
$V_x = 30\ \rm{m}/ \rm{s}$, the linearized continuous-time dynamics are
\[
A = \begin{bmatrix}
0 & V_x & V_x & 0 & 0 \\
0 & 0   & 0   & 1 & 0\\
0 & 0   & -\frac{2 C_\alpha}{m V_x} & \frac{C_\alpha (\ell_r - \ell_f)}{m V_x^2} - 1 &0\\
0 & 0   & \frac{C_\alpha(\ell_r - \ell_f)}{I_{xx}} & -\frac{C_\alpha (\ell_r^2 + \ell_f^2)}{I_{xx} V_x}&0\\
0&0&0&0&0
\end{bmatrix}
\quad B = \begin{bmatrix}
    0 \\
    0 \\
    \frac{C_\alpha}{m V_x} \\ \frac{C_\alpha \ell_f}{I_{xx}}\\1
\end{bmatrix},
\]
where $\ell_f = 1.56 \ \rm{m}$ and $\ell_r = 1.64 \ \rm{m}$ are the moment arms
of the front and rear wheels relative to the center of mass, $C_\alpha = 246994
\ \rm{N}/\rm{rad}$ is the tire cornering stiffness, $m=2041 \ \rm{kg}$ is the
mass of the vehicle, and $I_{zz} = 4964 \ \rm{kg} \cdot \rm{m}^2$ is the moment
of inertia about the yaw axis. These parameters approximately represent a 2017 BMW 740i sedan. The system is subject to output constraints on $y^\top = [
\alpha_f \quad \alpha_r \quad \delta_f ]^\top$, where $\alpha_f, \alpha_r$ are
the front and rear sideslip angles
\begin{subequations}
\begin{align}
\alpha_f=&\  \delta_f - \arctan\left(\beta + \frac{\ell_f}{V_x} \omega\right), \\
\alpha_r =&
  -\arctan(\beta - \frac{\ell_r}{V_x} \omega).
\end{align}
\end{subequations}
Specifically, the system must satisfy $y\in\mathcal Y$, with
\begin{equation}
\mathcal{Y} = [-8^\circ, 8^\circ] \times [-8^\circ, 8^\circ] \times [-30^\circ, 30^\circ],
\end{equation}
which limits the front and rear slip angles to prevent drifting, and accounts for mechanical limits on the steering angle.
The incremental cost and terminal cost are, respectively,  $\ell(x,u)=\frac12x^\top Qx+\frac12 u^\top Ru$ and $J(x) = \frac12x^\top P x$, where $Q = \rm{diag}(1, 1, 0 ,0, 0)$, $R = 0.1$, $S = 0_{5\times 1}$, and $P$ is given by the solution of the discrete algebraic Riccati
equation.  The chosen time horizon is $T=4\textrm{s}$.

Figure~\ref{fig:states-00-10-30-90} shows the solution estimates of the lateral position $s$ and the constrained outputs $\delta_f$, $\alpha_f$, and $\alpha_r$ at iterations $0,\ 10,\ 30,$ and $90$. Figure~\ref{fig:residuals} shows the 2-norm of the residual vector $F(z)$ at every iteration using both the $\min$ and the Fischer-Burmeister NCP functions. Although the $\min$ function begins at a higher value, the two display fairly similar behaviors. The 2-norm of the residual vector decreases by more than 4 orders of magnitude in the first 10 iterations and then tends to an asymptote as the ODE solver approaches machine precision.

\begin{figure}
    \centering
    \includegraphics[width=\linewidth]{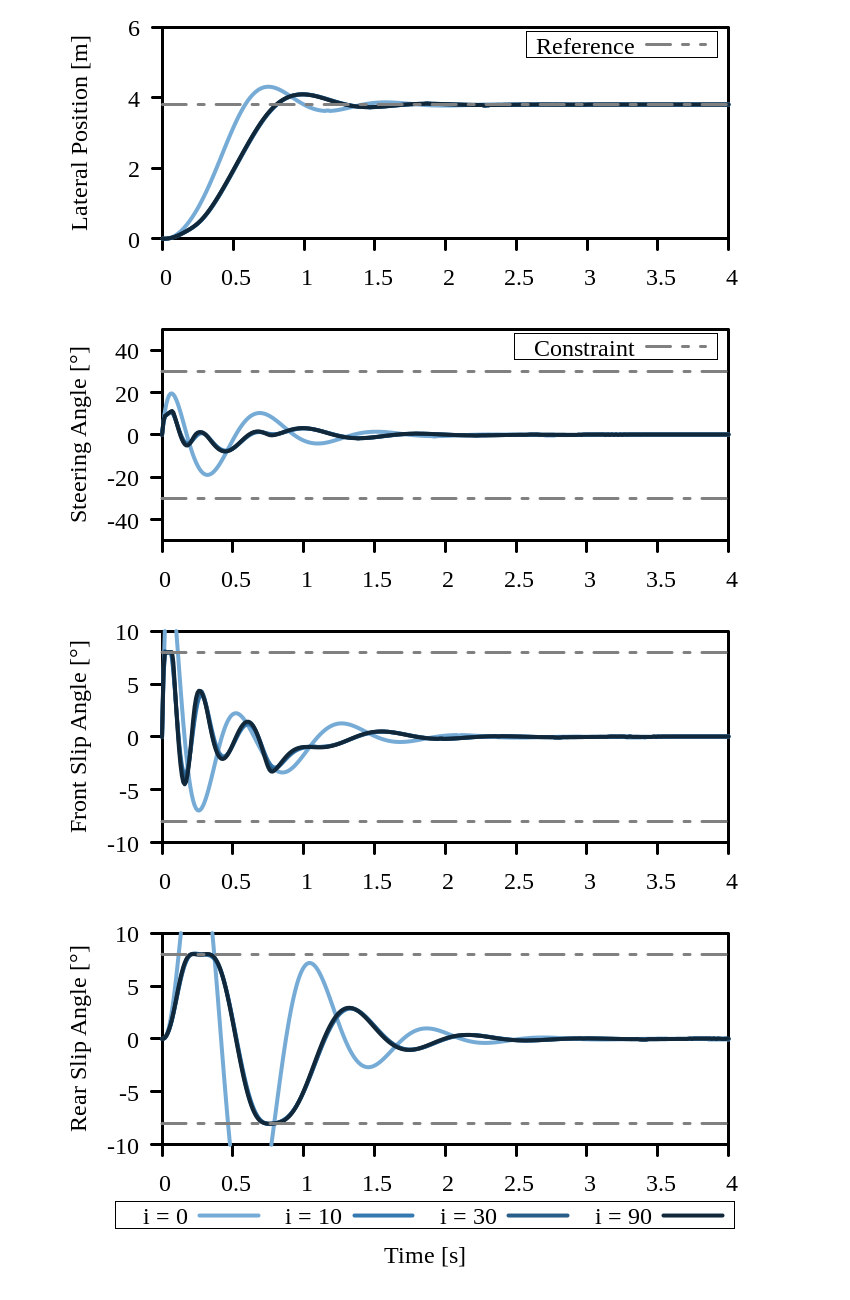}
    \caption{Result of Newton-step solver at iterations $i = 0,\ 10,\ 30,$ and $90$ using Fischer-Burmeister as NCP function.}
    \label{fig:states-00-10-30-90}
\end{figure}

\begin{figure}
    \centering
    \includegraphics[width=0.9\linewidth]{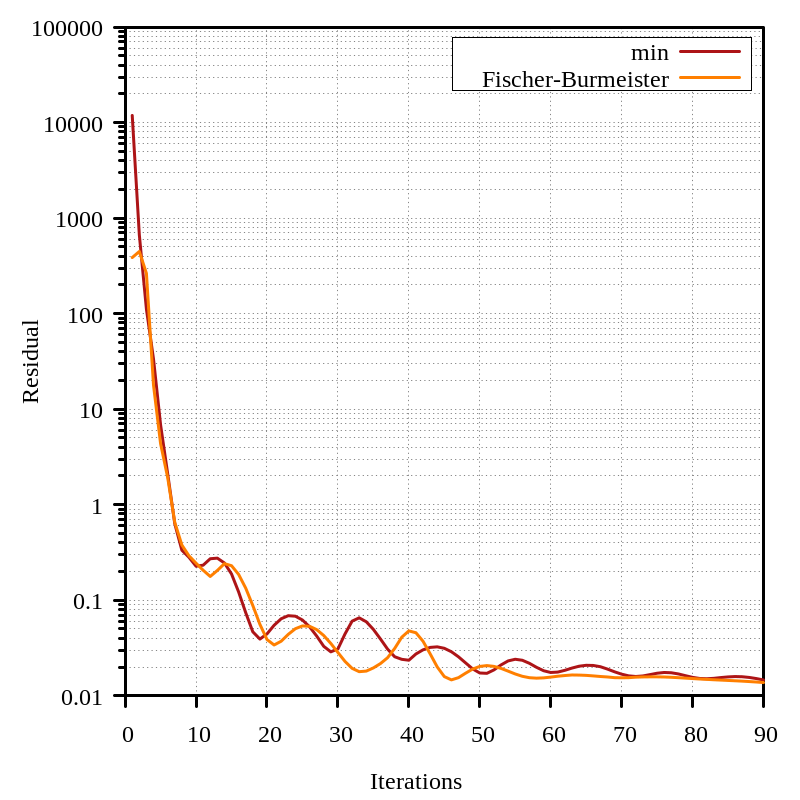}
    \caption{$||F(z)||_2$ versus number of solver iterations for both $\min$ and Fischer-Burmeister NCP functions. Regularization parameter $\delta$ was chosen experimentally. For min function: $\delta = 1\times10^{-1}$. For Fischer-Burmeister function: $\delta = 1\times 10^{-2}$}.
    \label{fig:residuals}
\end{figure}

\section{Conclusions}

In this work, we defined a novel method for solving a constrained OCP by using an NCP function to transform the KKT necessary conditions into a non-smooth rootfinding problem in function space. By applying a semi-smooth variant of Newton's method, we showed that the Newton updates could be computed by solving a differential Riccati equation at each solution step. Numerical simulations demonstrated the solver's convergence to an optimal solution. Future research efforts will extend the proposed method to nonlinear systems.

\bibliography{Refs}       

\ifarxiv
\section{Appendix}\label{sec:appendix}
\subsection{Proof of Theorem 3}\label{sec:proof-of-thm3}
We begin by showing that the operator $G$ in \eqref{eq:newton_step_expanded} is invertible and bounded in a neighborhood of $\bar z$. Invertibility then immediately implies that $\{z_k\}$ is well-defined.

\begin{lmm}\label{thm:nonsingular}Consider the Newton-step equation \eqref{eq:newton_step_expanded}, let $\phi:\R^2\to\R$ be an NCP function, and let $(\eta,\gamma)\in[0,1]\times[0,1]$ be elements of its generalized Jacobian. Then, under Assumption \ref{ass:constraints}, there exists $\epsilon > 0$ and $M\in (0,\infty)$ such that $\|z - \bar z\| \leq \epsilon$ implies $\|G^{-1}\|_{L(Z,Y)} \leq M$ for all $G\in \partial F(z)$.

\end{lmm}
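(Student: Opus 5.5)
The plan is to show that, for any $z$ near $\bar z$ and any $G\in\partial F(z)$, the Newton-step system \eqref{eq:newton_step_expanded} with arbitrary right-hand side $(r_1,\dots,r_6)\in Y$ has a unique solution $\tilde z\in Z$ satisfying $\|\tilde z\|_Z\le M\|r\|_Y$, with $M$ independent of $z$ and $G$.

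The first step is to eliminate $\tilde\mu$ through \eqref{eq: replace mu}. Because $(\eta_i,\gamma_i)$ ranges over a bounded set, the coefficients $\alpha_i,\beta_i$ in \eqref{eq: Barrier} are bounded in $L^\infty$. This holds because $\eta_i+\delta\ge\delta>0$, and $\beta_i\ge 0$ because $\gamma_i\le 0$ for the NCP functions considered (I read the sign convention $(\eta,\gamma)\in[0,1]\times[0,1]$ up to the sign of $\gamma$). The bounds are uniform in $z$ on a bounded neighborhood, and $\alpha_i$ depends affinely on $r_6$. Substituting, with general $r_3$ kept, reduces the system to the time-varying LQ boundary value problem \eqref{eq:LQR} with weights $Q,R,S$ and affine terms $q,r$ depending linearly on $(r_3,r_4,r_5,r_6)$. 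The key structural fact is that $\beta_i\nabla c_i\nabla c_i^\top\succeq0$ and $\mu_i\nabla^2c_i\succeq0$ for $\mu$ near $\bar\mu\ge0$. There is a subtlety here: $\mu$ may be slightly negative near $\bar z$, but continuity of $\nabla^2 c$ and smallness of $\epsilon$ absorb this. Combined with strong convexity of $\ell$ (Assumption~\ref{ass:costs}(iii)), these give $\begin{bmatrix}Q&S\\S^\top&R\end{bmatrix}\succeq \kappa I$ for some $\kappa>0$ uniform in $z$ and $G$. It follows that $R\succeq\kappa I$, so $R^{-1}$ is bounded, and $P(T)=\nabla^2J\succeq0$.

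The second step uses the positive semidefinite structure to conclude that the Riccati equation \eqref{eq:Riccati1} has a solution on all of $[0,T]$ with no finite escape. The standard comparison argument shows that $P(t)$ is sandwiched between $0$ and the solution of the equation with the feedback $\tilde u=0$ (a Lyapunov equation). This gives $\|P\|_\infty\le C$ with $C$ depending only on the uniform bounds of $Q,R,S,A,B,\nabla^2J$. Then \eqref{eq:Riccati2}, \eqref{eq: Optimal Traj}, \eqref{eq: replace u}, \eqref{eq: affine map} and \eqref{eq: replace mu} form linear ODEs and algebraic maps with uniformly bounded coefficients. Gronwall's inequality therefore yields $\|\tilde x\|_{W^{1,\infty}},\|\tilde u\|_\infty,\|\tilde\lambda\|_{W^{1,\infty}},\|\tilde\mu\|_\infty\le M\|r\|_Y$. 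Uniqueness follows because any solution of the homogeneous system gives an LQ problem with zero data whose strictly convex cost forces $\tilde x=\tilde u=0$, and hence $\tilde\lambda=\tilde\mu=0$. Existence together with uniqueness gives invertibility, and the bound gives $\|G^{-1}\|\le M$.

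The main obstacle is the uniform positive definiteness in the first step. The $\delta$-regularization sidesteps the degenerate case $\eta_i=0$, which is the reason the elimination of $\tilde\mu$ works at all. The argument, however, also needs $\mu_i\nabla^2 c_i$ to remain close to positive semidefinite uniformly, and this requires relating $\mu$ to $\bar\mu\ge0$ in $L^\infty$. The constraint qualifications (A1)-(A2) of Assumption~\ref{ass:constraints}(vi) guarantee $\bar\mu\in L^\infty$, and I would invoke them at exactly this point. If strong convexity of $\ell$ were only in $u$, the Remark's detectability condition would instead be used to bound $P$.
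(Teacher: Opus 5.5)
Your proposal is correct and follows essentially the same route as the paper. Both arguments eliminate $\tilde\mu$ via the $\delta$-regularized formula \eqref{eq: replace mu}, use strong convexity of $\ell$, $\beta\ge 0$ and $\bar\mu\ge 0$ to get positive definite reweighted LQ weights near $\bar z$, solve through the Riccati equations, and then bound the solution linearly in the right-hand side. The paper does the last step by tracking each perturbation $\Gamma_j$ through $p$ and $\tilde x$, which is equivalent to your Gronwall estimate.

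You are more careful than the paper in four places: uniformity of $M$ in $z$ and $G$, ruling out finite escape via $0\preceq P\preceq\Pi$, uniqueness, and the sign of $\gamma$ needed for $\beta\ge 0$. One caveat applies to both: what you actually invert is the $\delta$-regularized operator, not $G\in\partial F(z)$ itself.
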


\begin{pf}
Given $z\in Z$, the assumptions on $\phi$ ensure that \eqref{eq: Barrier} satisfies $(\alpha,\beta)\!\in\!\LP{\infty}{p}\!\times\!\LP{\infty}{p}$ and $\beta(t)\geq0$. Since Assumptions \ref{ass:constraints}.(iii-v) imply $\nabla^2\ell(t)\!\succ\!0$ and $\nabla^2 c_i(t)\!\succeq\!0$, and since the solution to \eqref{eq:KKT} must satisfy $\bar\mu(t)\geq0$, there exists a neighborhood of $\bar z$ such that 
\begin{equation}\label{eq:suff_conds}
\nabla^2\ell(t)+\sum_{i=1}^p\mu_i(t)\nabla^2c_i(t)+\beta_i(t)\nabla c_i(t)\nabla^\top\! c_i(t)\succ0.
\end{equation}
This is sufficient to ensure 
\begin{equation}\label{eq:Schur}
    \qquad\qquad\qquad\begin{bmatrix}
        Q(t)&S^\top(t)\\S(t) &R(t)
    \end{bmatrix}\succ0,\qquad\qquad\AE
\end{equation}
Combining this property with Assumption \ref{ass:constraints}.(ii) ensures that the solution to the differential Riccati equation \eqref{eq:Riccati1} exists and is unique\footnote{The existence and uniqueness of the solution also holds under the weakened conditions $Q(t)\succeq0$ and $(A,Q(t))$ detectable $\forall t\in[0,T]$.}. Since \eqref{eq:Schur} implies $R(t)\succ0$, \eqref{eq:Riccati2} and \eqref{eq: Optimal Traj} are globally Lipschitz continuous, which implies that their solutions also exist and are unique. This is sufficient to show $\tilde x\in\sob\infty n$. Moreover, \eqref{eq: affine map} implies  $\lambda^+\in\sob\infty n$, \eqref{eq: replace u} implies $\tilde u\in\LP\infty m$, and \eqref{eq: replace mu} implies $\tilde\mu\in\LP\infty p$. Finally, since $\lambda\in\sob\infty n$, we obtain $\lambda^+\!\!-\!\lambda=\tilde\lambda\in\sob\infty n$.\medskip

Next we will show that  $\zeta=-G^{-1}(F(z)+\Gamma)$ satisfies $\|\zeta-\tilde z\|_\infty\leq M\|\Gamma\|_\infty,$ $\forall \Gamma\in Y$, which immediately implies $\|G^{-1}\|_{L(Z,Y)} = \sup_{\Gamma \neq 0} \|G^{-1}\Gamma\|/\Gamma \leq M$. 

To prove the statement, it is sufficient to track how a change in the residual \eqref{eq:residuals} affects the solution to the two-point boundary problem \eqref{eq:newton_step_expanded}. \\Given $r_6(t)=-\phi(z(t))+\Gamma_6(t)$, \eqref{eq:residuals} leads to a modified
\begin{equation}
    \alpha_i(t)=\frac{-\phi_i(t)+\Gamma_{6,i}(t)}{\eta_i(t)+\delta},
\end{equation}
which, in turn, affects the linear cost terms $q(t)$ and $r(t)$. Likewise, $\Gamma_4(t)$ and $\Gamma_5(t)$ translate into additive disturbances on $q(t)$ and $r(t)$, respectively. Since $q$ and $r$ do not appear in \eqref{eq:Riccati1}, $P(t)$ is unaffected. Conversely, since \eqref{eq:Riccati2} is affine in both $q$ and $r$, its solution $p(t)$ is Lipschitz continuous with respect to $\Gamma_4\in L_n^\infty$, $\Gamma_5\in L_m^\infty$, and $\Gamma_6\in L_p^\infty$. Moreover, since the residual $r_2$ maps to the boundary conditions $p(T)=\nabla_xJ(x(T))+\Gamma_2$, $p(t)$ is also Lispchitz continuous with respect to $\Gamma_2\in\R^n$.

We are now left to study the effects of $\Gamma_1\in\R^n$ and $\Gamma_3\in L_n^\infty$. The former simply changes the initial conditions $\tilde x(0)=x(0)-x_0+\Gamma_1$, whereas the latter modifies \eqref{eq: Optimal Traj} into
\[
\dot{\tilde x}=(A-BR^{-1}(B^\top\!P+S))\tilde x-BR^{-1}(B^\top\!p+r)+\Gamma_3,
\]
which is affine in $p$, $r$, and $\Gamma_3$. Combined with the Lipschitz continuity properties of $p(t)$, we show that $\tilde x(t)$ is Lipschitz continuous with respect to $\Gamma\in Y$. Lipschitz continuity of $\tilde z=(\tilde x,\tilde u,\tilde\lambda,\tilde\mu)$ then follows from \eqref{eq: replace u}-\eqref{eq: affine map} and \eqref{eq: replace mu}. \QED
\end{pf}

Using Lemma~\ref{thm:nonsingular}, we can now invoke Theorem~\ref{thm:Convergence} to conclude that $\|z_k -\bar z\|_\infty\to0$ superlinearly.

Finally, we need to show that the stopping condition $\|F(z_k)\|_2 \leq \epsilon_t$ is satisfied in finite time. To prove this, we note that, for any $f\in Z$,
\begin{align}
    \|f\|_2^2 &= \int_0^T \|f(t)\|^2 dt \leq \int_0^T \esssup_{t\in [0,T]} \|f(t)\|^2 dt \\
    &= T \esssup_{t\in [0,T]} \|f(t)\|^2 = T \|f\|_\infty^2.
\end{align}
Since $\|z_k - \bar z\|_\infty \to 0$ superlinearly, it then follows that $\|F(z_k)\|_2\leq \sqrt{T} \|F(z_k) - F(\bar z)\|_\infty \leq \sqrt{T} \Delta \|z_k - \bar z\|_\infty \to 0$ (where $\Delta$ is the Lipschitz constant of $F$) as well. Thus, the stopping condition will be satisfied in finite time.

\fi

\end{document}